\documentclass[11pt,reqno]{amsart}
\usepackage{amssymb}
\usepackage{amsthm}
\usepackage{amsmath}
\usepackage{amscd}
\usepackage{verbatim,url}
\usepackage{xcolor,cancel}
\usepackage[mathscr]{eucal}
\usepackage{mathrsfs}

\pagestyle{myheadings}
\textheight=7 true in \textwidth=6 true in 
%\hoffset=-0.8true in
\hoffset=-0.5true in

\usepackage[OT2,T1]{fontenc}
\DeclareSymbolFont{cyrletters}{OT2}{wncyr}{m}{n}
\DeclareMathSymbol{\Sha}{\mathalpha}{cyrletters}{"58}

\usepackage{enumitem}
\usepackage{bm}
\usepackage{cleveref}
\allowdisplaybreaks

\setlist[itemize]{leftmargin=*}
\setlist[enumerate]{leftmargin=*,label=\rm{(\arabic*)}}

\newtheorem{theorem}{Theorem}
\newtheorem{lemma}[theorem]{Lemma}

\newtheorem{proposition}[theorem]{Proposition}

\theoremstyle{remark}

\numberwithin{theorem}{section} \numberwithin{equation}{section}
\numberwithin{figure}{section}

%%New command
\newcommand{\R}{\mathbb R}
\newcommand{\N}{\mathbb N}

\newcommand{\Z}{\mathbb{Z}}

\newcommand{\Pc}{\mathcal{P}}
\newcommand{\Sc}{\mathcal{S}}

\renewcommand{\a}{\alpha}
\renewcommand{\b}{\beta}

\newcommand{\e}{\varepsilon}
\newcommand{\g}{\gamma}
\renewcommand{\l}{\lambda}

\newcommand{\vf}{\varphi}

\newcommand{\s}{\sigma}
\renewcommand{\t}{\tau}
\renewcommand{\th}{\theta}
\newcommand{\vth}{\vartheta}

\newcommand{\De}{\Delta}
\newcommand{\Th}{\Theta}
\newcommand{\es}{\emptyset}
\newcommand{\sm}{\setminus}

\newcommand{\ou}{\mathrm{ou}}
\newcommand{\eu}{\mathrm{eu}}
\newcommand{\ev}{\mathrm{ev}}
\newcommand{\od}{\mathrm{od}}
\newcommand{\ed}{\mathrm{ed}}

\newcommand{\yz}{\mathrm{yz}}
\newcommand{\wx}{\mathrm{wx}}
\newcommand{\ex}{\mathrm{ex}}
\newcommand{\ez}{\mathrm{ez}}

\def\lp{\left(}
\def\rp{\right)}

\makeatletter
\@namedef{subjclassname@2020}{%
	\textup{2020} Mathematics Subject Classification}
\makeatother

\title{On the asymptotic behavior for partitions separated by parity}

\author{Kathrin Bringmann}
\author{William Craig}
\author{Caner Nazaroglu}
%\address{University of Cologne, Department of Mathematics and Computer Science, Weyertal 86-90, 50931 Cologne, Germany}

%\date{\today}
\subjclass[2020]{11P82, 11P81}
\keywords{Asymptotics, parity, partitions, Tauberian theorem.}
%\thanks{}

\begin{document}

\begin{abstract}
	The study of partitions with parts separated by parity was initiated by Andrews in connection with Ramanujan's mock theta functions, and his variations on this theme have produced generating functions with a large variety of different modular properties. In this paper, we use Ingham's Tauberian theorem to compute the asymptotic main term for each of the eight functions studied by Andrews.
\end{abstract}
\maketitle

\section{Introduction and statement of results}
A {\it partition} of a non-negative integer $n$ is a non-increasing sequence $\lambda = (\lambda_1, \lambda_2, \dots,$ $\lambda_\ell)$ of positive integers whose parts sum to $n$. Recently, Andrews \cite{A1} initiated the study of partitions with parts separated by parity. These are connected to Ramanujan's fifth order mock theta functions \cite{A2}, exhibit congruences in certain arithmetic progressions \cite{Ch}, and have recently motivated Burson and Eichhorn to introduce new generalizations of partitions called copartitions \cite{BE}, which have many interesting features. 
At its core, the rationale behind the construction of these partitions is through the 
$q$-hypergeometric series that naturally count them. 
So partitions separated by parity provide a new avenue for the exploration of combinatorial objects within the context of $q$-hypergeometric series and false, mock, or more general modular properties associated with them.

To start describing these partitions, we first introduce Andrews' notation from \cite{A1}. Each of these partition functions takes the form $p^{\mathrm{wx}}_{\mathrm{yz}}(n)$. The symbols $\mathrm{wx}, \mathrm{yz}$ are formed with first letter either $\mathrm{e}$ or $\mathrm{o}$ (which represent even or odd parts) and second character either $\mathrm{u}$ or $\mathrm{d}$ (which represent unrestricted parts or distinct parts). Parts represented by the symbol in the subscript are assumed to lie below all parts represented by the superscript. For instance, $p_{\ed}^{\ou}(n)$ counts the number of partitions with all even parts smaller than all odd parts, each even part is distinct, and odd parts are unrestricted. We consider all eight families of partitions we can build this way, namely $p_\eu^\ou(n), p_\eu^\od(n), p_\ou^\eu(n), p_\ou^\ed(n), p_\ed^\ou(n), p_\ed^\od(n), p_\od^\eu(n)$, and $p_\od^\ed(n)$. Collectively, we refer to these families as partitions with parts separated by parity.

In this paper, we consider the asymptotic properties of partitions with parts separated by parity. The study of asymptotic properties of partitions goes back to the seminal work of Hardy and Ramanujan \cite{HR}, who proved that the function $p(n)$ counting the number of partitions of $n$ satisfies
\begin{align*}
	p(n) \sim \dfrac{1}{4\sqrt{3}n} e^{\pi \sqrt{\frac{2n}{3}}}, \ \ \ \text{as } n \to \infty.
\end{align*}
Here we prove analogous asymptotics for the family of partition functions mentioned above. We use the work of Andrews \cite{A2, A1} as well as of the first author and Jennnigs-Shaffer \cite{BJ} on generating functions for these sequences.\footnote{In much of the previous literature on these topics, the generating functions for partitions with parts separated by parity (in particular $p_\ou^\eu$, $p_\ou^\ed$, $p_\od^\eu$, $p_\od^\ed$, $p_\ed^\ou$, and $p_\ed^\od$) omit certain ``trivial'' components coming from partitions into only odd parts or only even parts. In our work, these omissions are corrected.} Note that one could improve some of these partition asymptotics to include more terms or exact formulas, as the generating functions involve modular forms, mock modular forms, false modular forms, as well as false-indefinite theta functions that are hybrids of the latter two. One example in particular, namely $p_\od^\eu(n)$, is of special interest because it involves false-indefinite theta functions associated with Maass forms, which can be efficiently studied with mock Maass forms developed by Zwegers \cite{Zw}.\footnote{In upcoming work, we will study the relevant modular properties in more detail and show that a more refined asymptotic formula for $p_\od^\eu(n)$ can be obtained in this way.}

\begin{theorem} \label{T:Main Theorem}
	We have the following asymptotics as $n \to \infty$:
	\begin{align}
		p_\eu^\ou(n) &\sim \frac{e^{\pi\sqrt\frac{n}{3}}}{2\pi\sqrt{n}}, \label{T:EU-OU main term} \\
		p_\eu^\od(n) &\sim \frac{e^{\pi\sqrt\frac n3}}{4\sqrt{2} \cdot3^\frac14 n^\frac34}, \label{T:EU-OD main term} \\
		p_\od^\eu(n) &\sim \frac{e^{\pi\sqrt\frac{n}{3}}}{2\sqrt{3}n}, \label{T:OD-EU main term} \\
		p_\ed^\ou(n) &\sim \frac{e^{\pi\sqrt\frac n3}}{4\cdot3^\frac14n^\frac34}, \label{T:ED-OU main term} \\
		p_\ed^\od(n) &\sim \frac{3^\frac14\left(\sqrt2-1\right)e^{\pi\sqrt\frac n6}}{2^\frac34\pi n^\frac14}, \label{T:ED-OD main term} \\
		p_\ou^\eu(n) &\sim \frac{3^\frac14e^{\pi\sqrt\frac n3}}{2\pi n^\frac14}, \label{T:OU-EU main term} \\
		p_\ou^\ed(n) &\sim \frac{e^{\pi\sqrt\frac n3}}{4\sqrt2\sqrt n}, \label{T:OU-ED main term} \\
		p_\od^\ed(n) &\sim \frac{3^\frac14\left(\sqrt2-1\right)e^{\pi\sqrt\frac n6}}{2^\frac14\pi n^\frac14}. \label{T:OD-ED main term}
	\end{align}
\end{theorem}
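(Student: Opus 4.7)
The plan is to prove each asymptotic as a direct application of Ingham's Tauberian theorem. In the form needed here: if $f(q) = \sum_{n \geq 0} a_n q^n$ has non-negative, weakly increasing coefficients and satisfies
\begin{equation*}
f(e^{-t}) \sim \lambda\, t^{\alpha}\, e^{A/t} \quad \text{as } t \to 0^+
\end{equation*}
for some $\lambda, A > 0$ and $\alpha \in \R$, then
\begin{equation*}
a_n \sim \frac{\lambda\, A^{\frac{\alpha}{2}+\frac14}}{2\sqrt{\pi}\, n^{\frac{\alpha}{2}+\frac34}}\, e^{2\sqrt{An}}.
\end{equation*}
Each formula in Theorem \ref{T:Main Theorem} corresponds to a specific triple $(\lambda, \alpha, A)$, so the task is to compute these eight triples and verify the hypotheses in each case.

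For each family, I would start by recording the generating function $G(q) = \sum_n p^{\mathrm{wx}}_{\mathrm{yz}}(n) q^n$ using the formulas of Andrews \cite{A1} and Bringmann--Jennings-Shaffer \cite{BJ}. These generating functions fall into three analytic types: eta quotients (genuinely modular), mixed expressions involving classical mock or false theta functions, and one exceptional case ($p_\od^\eu$) featuring a false-indefinite theta function of Maass-form type. In every case, the dominant exponential rate, either $e^{\pi\sqrt{n/3}}$ or $e^{\pi\sqrt{n/6}}$, is governed by an underlying modular skeleton, while the power of $n$ and the numerical prefactor come from more subtle polynomial corrections in $t$.

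The core of the proof is then the asymptotic analysis of $G(e^{-t})$ as $t \to 0^+$. For the modular cases, this follows immediately from the transformation formula of the Dedekind eta function under $\tau \mapsto -1/\tau$, which converts the expansion of $G(q)$ near $q = 1$ into its expansion near $q = 0$, from which $(\lambda, \alpha, A)$ can be read off directly. For generating functions built from mock or false theta series, one isolates the modular factor producing the $e^{A/t}$ growth and estimates the non-modular remainder by Euler--Maclaurin summation or by integral representations tailored to the mock/false series, thereby pinning down the correct $\lambda$ and $\alpha$. The main obstacle will be $p_\od^\eu$, whose false-indefinite theta function does not admit a classical mock modular completion; here the leading term can still be obtained by direct asymptotic analysis, while the deeper structure via Zwegers's mock Maass forms \cite{Zw} is deferred to forthcoming work.

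Finally, the monotonicity hypothesis of Ingham's theorem must be verified for each of the eight sequences. This can be handled combinatorially, for example by exhibiting for each family an injection $p^{\mathrm{wx}}_{\mathrm{yz}}(n) \hookrightarrow p^{\mathrm{wx}}_{\mathrm{yz}}(n+k)$ via the addition of a single permissible smallest part, with $k$ depending on the parity and distinctness constraints of the family; once monotonicity is in place, plugging the triple $(\lambda, \alpha, A)$ into Ingham's formula produces each of the eight asymptotic expressions in \eqref{T:EU-OU main term}--\eqref{T:OD-ED main term}.
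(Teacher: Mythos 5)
Your overall strategy coincides with the paper's: Ingham's Tauberian theorem plus combinatorial monotonicity plus modular/mock-modular asymptotics of the generating functions. However, there are two concrete gaps in the outline as written. First, the version of the Tauberian theorem you state is false: requiring only $f(e^{-t})\sim \lambda t^{\alpha}e^{A/t}$ as $t\to 0^+$ along the real axis is not sufficient, and there are counterexamples to the conclusion in that generality. One also needs the uniform bound $f(e^{-z})\ll |z|^{\alpha}e^{A/|z|}$ as $z\to 0$ in every cone $R_\Delta$, and verifying this is where most of the analytic work lies (eta-transformation in cones, Jacobi theta asymptotics, one- and two-dimensional Euler--Maclaurin estimates for the mock theta function $f(-q)$ and for Ramanujan's $\sigma$-function in the $p_\od^\eu$ case). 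Your plan never mentions this hypothesis, so as stated the deduction of the coefficient asymptotics does not go through.

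Second, the monotonicity step fails for two of the eight families: $p_\eu^\od$ and $p_\ed^\od$ are \emph{not} weakly increasing (e.g.\ $p_\eu^\od(4)=3>2=p_\eu^\od(5)$ and $p_\ed^\od(6)=3>2=p_\ed^\od(7)$), so no injection $\mathcal P(n)\hookrightarrow\mathcal P(n+1)$ exists and the Tauberian theorem cannot be applied to the full sequence. Your hedge that the shift $k$ may depend on the family points in the right direction ($k=2$ works here), but knowing $p(n)\le p(n+2)$ only lets you apply the theorem to the subsequences $p(2n)$ and $p(2n+1)$ separately; for that you must construct their generating functions via $\tfrac12\left(F(q^{1/2})\pm F(-q^{1/2})\right)$, determine the asymptotics of each piece, and check that the two subsequences have the same main term before recombining. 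This is a genuine extra step absent from your outline. (A smaller point: for several of the six monotone families the required injection is not ``add a smallest part''---e.g.\ for $p_\ed^\ou$ and $p_\eu^\ou$ one must instead convert the largest even part $\lambda_m$ into $\lambda_m+1$, and for $p_\od^\eu$, $p_\od^\ed$ one must in one subcase remove a part equal to $1$ and enlarge the largest part by $2$---so the combinatorics also needs more care than advertised.)
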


The remainder of the paper is structured as follows. In Section \ref{Preliminary asymptotics}, we work out several preliminary results required for studying asymptotic properties of the generating functions we consider. In Section \ref{Combinatorial arguments}, we prove various combinatorial injections that are required in order to justify our applications of Ingham's Tauberian theorem. In Section \ref{Proofs} we prove Theorem \ref{T:Main Theorem}. Section \ref{openquestion} contains open questions.

\section*{Acknowledgements}

The first and the second author have received funding from the European Research Council (ERC) under the European Union's Horizon 2020 research and innovation programme (grant agreement No. 101001179). The first and the third author were supported by the SFB/TRR 191 “Symplectic Structure in Geometry, Algebra and Dynamics”, funded by the DFG (Projektnummer 281071066 TRR 191).

\section{Preliminaries} \label{Preliminary asymptotics}

\subsection{A Tauberian Theorem}

We compute the main terms in the relevant asymptotic expansions using the Theorem 1.1 of \cite{BJM}, which follows from Ingham's Tauberian Theorem \cite{I},
for the special case $\a=0$.
For $\Delta\geq0$ define
\begin{equation*}
	R_\Delta := \{x+iy:\, x,y\in\R,\, x>0,\, |y|\le\De x\}.
\end{equation*}	

\begin{proposition}\label{P:CorToIngham}
Let $B(q)=\sum_{n\ge0}b(n)q^n$ be a power series whose radius of convergence is at least one and assume that $b(n)$ are non-negative and weakly increasing.
Also suppose that $\l$, $\b$, $\g\in\R$ with $\g>0$ exist such that\footnote{The second condition in \eqref{E:as1} is often dropped in literature, however in its absence there are counterexamples to the conclusion of the proposition as discussed in \cite{BJM}.}
	\begin{equation}\label{E:as1}
	B\left(e^{-t}\right) \sim \l t^\b e^\frac\g t \quad\text{as } t \to 0^+,\qquad B\left(e^{-z}\right) \ll |z|^\b e^\frac{\g}{|z|} \quad\text{as } z \to 0,
	\end{equation}
	with the latter condition holding in each region $R_\Delta$
	for $\De \geq 0$. Then we have
	\[
	b(n) \sim \frac{\l\g^{\frac\b2+\frac14}}{2\sqrt\pi n^{\frac\b2+\frac34}}e^{2\sqrt{\g n}} \qquad\text{as } n \to \infty.
	\]
\end{proposition}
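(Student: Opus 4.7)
The plan is to extract $b(n)$ from $B(q)$ by Cauchy's integral formula and carry out a saddle-point analysis, using the sector bound in \eqref{E:as1} together with the monotonicity Tauberian hypothesis on $b(n)$ to control the non-saddle contributions. Since $B$ converges on $|q|<1$, for any $0<\rho<1$,
\[
b(n) = \frac{e^{n\rho}}{2\pi}\int_{-\pi}^{\pi} B\!\left(e^{-\rho+i\theta}\right) e^{-in\theta}\,d\theta.
\]
The phase $n\rho + \g/\rho$ is minimized at $\rho_* = \sqrt{\g/n}$, which is the choice I would make throughout and which already produces the expected exponential $e^{2\sqrt{\g n}}$.

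I would split the integral into a major arc $|\theta|\le \rho^\s$ for some $\s\in(4/3,3/2)$ and a minor arc elsewhere. On the major arc, set $z=\rho-i\theta$ and expand
\[
\frac{\g}{z} = \frac{\g}{\rho} + \frac{i\g\theta}{\rho^2} - \frac{\g\theta^2}{\rho^3} + O\!\left(\frac{\theta^3}{\rho^4}\right),
\]
with the cubic remainder $o(1)$ since $\s>4/3$. Substituting the asymptotic $B(e^{-z}) \sim \l z^\b e^{\g/z}$ and using $n=\g/\rho^2$ to cancel the linear phases $e^{i\g\theta/\rho^2}$ and $e^{-in\theta}$ leaves a Gaussian whose tail beyond $|\theta|=\rho^\s$ is negligible because $\s<3/2$, giving
\[
b(n) \sim \frac{\l\rho^\b\, e^{2\sqrt{\g n}}}{2\pi}\int_{-\infty}^{\infty} e^{-\g\theta^2/\rho^3}\,d\theta = \frac{\l\rho^{\b+\frac32}}{2\sqrt{\pi\g}}\,e^{2\sqrt{\g n}}.
\]
Inserting $\rho = \sqrt{\g/n}$ reproduces the claimed constant $\l\g^{\b/2+1/4}/(2\sqrt\pi\,n^{\b/2+3/4})$.

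For the minor arc I would fix a sufficiently large $\De$ and use the sector bound $B(e^{-z})\ll |z|^\b e^{\g/|z|}$ on $R_\De$. Since on the arc $|z|=\sqrt{\rho^2+\theta^2}>\rho$, the factor $e^{\g/|z|}$ loses to $e^{\g/\rho}$ by a Gaussian-type amount that overwhelms the polynomial $|z|^\b$ factor, contributing below the main term. For the remaining range $|\theta|>\De\rho$, which lies outside every $R_\De$, I would invoke the monotonicity (and positivity) of $b(n)$: this Tauberian hypothesis forces $|B(e^{-\rho+i\theta})|$ to be genuinely smaller than $B(e^{-\rho})$ when $\theta$ is away from $0$, by standard comparison with $\sum_{k\le N}b(k)e^{-k\rho}$.

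The main obstacle will be this minor-arc analysis: the sector bound $B\ll |z|^\b e^{\g/|z|}$ has exactly the same order of magnitude as the main-term contribution, so one must carefully track the Gaussian decay in $\theta$ inside $R_\De$ and then splice it with the Tauberian monotonicity control outside $R_\De$. This is precisely why the second condition in \eqref{E:as1} and the monotonicity assumption are both indispensable; without them counterexamples exist, as noted in the footnote. Combining the major-arc main term with the minor-arc estimates then yields the asymptotic of the proposition.
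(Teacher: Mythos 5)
First, note that the paper does not prove this proposition itself: it is quoted from Theorem 1.1 of \cite{BJM} (which in turn rests on Ingham's Tauberian theorem \cite{I}), so your task was genuinely to reconstruct that proof. Your direct saddle-point attack on $b(n)=\frac{e^{n\rho}}{2\pi}\int_{-\pi}^{\pi}B(e^{-\rho+i\theta})e^{-in\theta}\,d\theta$ has two concrete gaps, the first of which is fatal to the whole strategy. On the major arc you substitute $B(e^{-z})\sim\lambda z^{\beta}e^{\gamma/z}$ for complex $z=\rho-i\theta$, but the first hypothesis in \eqref{E:as1} gives this only for real $t\to0^+$, and the second gives only an upper bound of the same order of magnitude off the axis; together these do not determine the phase (nor even the modulus up to $1+o(1)$) of $B(e^{-z})$ in the range $\rho^{2}\ll|\theta|\ll\rho^{3/2}$ where the Gaussian mass of your integral sits. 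Indeed, positivity of the $b(n)$ yields $B(e^{-z})=B(e^{-\rho})(1+o(1))$ only for $|\theta|=o(\rho^{2})$ (via $|B(e^{-z})-B(e^{-\rho})|\le|\theta|\sum_{n}nb(n)e^{-n\rho}\ll|\theta|\rho^{-2}B(e^{-\rho})$), and beyond that the oscillation $e^{i\gamma\theta/\rho^{2}}$ you rely on is simply not visible from the hypotheses. This is precisely why the statement is a Tauberian theorem rather than a circle-method lemma: the passage from real-axis information to coefficient asymptotics must exploit monotonicity in an essential way, not an analytic continuation of the asymptotic.

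Second, your treatment of $|\theta|>\Delta\rho$ is unquantified, and the standard quantification is insufficient. Monotonicity gives that $(1-q)B(q)$ has non-negative coefficients, whence $|B(e^{-\rho+i\theta})|\le\frac{1-e^{-\rho}}{|1-e^{-\rho+i\theta}|}B(e^{-\rho})\ll\frac{\rho}{|\theta|}B(e^{-\rho})$; integrating this over $\Delta\rho<|\theta|\le\pi$ yields $O\lp\rho\log(1/\rho)B(e^{-\rho})\rp$, which exceeds the target main term $\asymp\rho^{3/2}B(e^{-\rho})$ by a factor $\rho^{-1/2}\log(1/\rho)$. (Your intermediate range $\rho^{\sigma}\le|\theta|\le\Delta\rho$ is fine: there $e^{\gamma/|z|}\le e^{\gamma/\rho}e^{-\gamma\theta^{2}/(4\rho^{3})}$ and the sector bound does suffice.) The proof in \cite{BJM} avoids both problems by applying Ingham's real-variable Tauberian theorem to the summatory function $a(n)=\sum_{k\le n}b(k)$, whose generating function $B(q)/(1-q)$ requires only the real asymptotic together with $b(n)\ge0$, and then recovering $b(n)$ from the sandwich $\frac{a(n)-a(n-m)}{m}\le b(n)\le\frac{a(n+m)-a(n)}{m}$; the sector bound is used only as an upper bound to control such differences, where the extra factor $\bigl|\frac{1-q^{m}}{1-q}\bigr|$ tames the far minor arc. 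You would need to restructure your argument along these lines rather than extracting $b(n)$ directly from $B$.
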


\subsection{A theta function}

In this subsection and throughout the rest of the paper, we define the {\it $q$-Pochhammer symbol} by
\begin{align*}
	\lp a;q \rp_n := \prod_{j=0}^{n-1} \lp 1 - a q^j \rp, \ \ \ n\in\N_0\cup\{\infty\}.
\end{align*}
The following lemma is a well-known consequence of the modularity of the \textit{Dedekind eta-function}
\begin{equation*}
	\eta\lp \tau \rp := e^{\frac{\pi i \tau}{12}} \prod_{n \geq 1} \lp 1 - e^{2\pi i n \tau} \rp.
\end{equation*}

\begin{lemma}\label{L:q-Pochhammer asymptotics}
	Let $q=e^{-z}$ and $\Delta \geq 0$. Then as $z\to0$ in $R_\Delta$, we have
	\begin{align*}
		\left( q;q \right)_\infty &\sim \sqrt{\dfrac{2\pi}{z}} e^{- \frac{\pi^2}{6z}}.
	\end{align*}
\end{lemma}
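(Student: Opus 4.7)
The plan is to derive the stated asymptotic from the standard modular transformation $\eta(-1/\tau)=\sqrt{-i\tau}\,\eta(\tau)$. First I would set $\tau := iz/(2\pi)$ so that $q=e^{-z}=e^{2\pi i\tau}$ and rewrite the definition of $\eta$ as
\[
\eta(\tau) = e^{-z/24}(q;q)_\infty,\qquad\text{equivalently}\qquad (q;q)_\infty = e^{z/24}\eta\!\left(\tfrac{iz}{2\pi}\right).
\]
Applying the modular transformation with $-1/\tau = 2\pi i/z$ then gives
\[
(q;q)_\infty = e^{z/24}\sqrt{\tfrac{2\pi}{z}}\,\eta\!\left(\tfrac{2\pi i}{z}\right) = e^{z/24}\sqrt{\tfrac{2\pi}{z}}\,e^{-\pi^2/(6z)}\prod_{n\ge1}\bigl(1-e^{-4\pi^2 n/z}\bigr),
\]
using the product expansion of $\eta$ at the end.

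The remaining task is to show that, as $z\to0$ in $R_\Delta$, the prefactor $e^{z/24}\to1$ and the infinite product tends to $1$. For $z=x+iy\in R_\Delta$ one has $|y|\le\Delta x$, hence $|z|^2=x^2+y^2\le(1+\Delta^2)x^2$, giving
\[
\mathrm{Re}\!\left(\tfrac{1}{z}\right) = \tfrac{x}{x^2+y^2}\ge \tfrac{1}{(1+\Delta^2)x}\ge \tfrac{1}{(1+\Delta^2)^{3/2}|z|}\longrightarrow\infty
\]
uniformly as $z\to0$ in $R_\Delta$. Therefore $|e^{-4\pi^2 n/z}|=e^{-4\pi^2 n\,\mathrm{Re}(1/z)}$ decays uniformly and geometrically in $n$, so the product converges uniformly to $1$. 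Since also $|e^{z/24}-1|\to0$, the claimed asymptotic follows.

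The only potentially subtle point is the uniformity of the error in the sector $R_\Delta$ (rather than merely along the positive real axis), but this is automatic once one records the cone estimate $\mathrm{Re}(1/z)\gtrsim 1/|z|$ above, which is all the sector condition $|y|\le\Delta x$ is designed to supply.
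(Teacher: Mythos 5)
Your proof is correct and is exactly the argument the paper has in mind: the paper states the lemma without proof as ``a well-known consequence of the modularity of the Dedekind eta-function,'' and your derivation via $\eta(-1/\tau)=\sqrt{-i\tau}\,\eta(\tau)$ together with the cone estimate $\mathrm{Re}(1/z)\gg 1/|z|$ in $R_\Delta$ is the standard way to make that precise. No issues.
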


We furthermore require the Jacobi theta function
\[
	\Th(\t) := \sum_{n\in\Z} q^\frac{n^2}{2},\qquad q := e^{2\pi i\t}.
\]
We have the following asymptotic behavior as a consequence of the modularity of the Jacobi theta function.

\begin{lemma}\label{L:Jacobi asymptotics}
	Let $q=e^{-z}$ and $\Delta \geq 0$. Then as $z\to0$ in $R_\Delta$, we have
	\begin{align*}
		\Theta\left( \dfrac{iz}{2\pi} \right) \sim \sqrt{\dfrac{2\pi}{z}}.
	\end{align*}
\end{lemma}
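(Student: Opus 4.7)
The plan is to apply the well-known modular transformation of the Jacobi theta function and then show that the transformed series is dominated by its constant term.

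First, recall the transformation $\Theta(-1/\tau) = \sqrt{-i\tau}\,\Theta(\tau)$ for $\tau$ in the upper half-plane, with the principal branch of the square root. Setting $\tau = iz/(2\pi)$, we compute $-i\tau = z/(2\pi)$ and $-1/\tau = 2\pi i/z$, so rearranging yields
\begin{equation*}
    \Theta\!\left(\frac{iz}{2\pi}\right) = \sqrt{\frac{2\pi}{z}}\,\Theta\!\left(\frac{2\pi i}{z}\right).
\end{equation*}
Thus the claim reduces to showing $\Theta(2\pi i/z) \to 1$ as $z \to 0$ in $R_\Delta$.

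By the definition of $\Theta$, we have $\Theta(2\pi i/z) = 1 + 2\sum_{n \geq 1} e^{-2\pi^2 n^2/z}$. The key point is to obtain a lower bound on $\operatorname{Re}(1/z)$ uniform in $R_\Delta$. Writing $z = x + iy$ with $x > 0$ and $|y| \leq \Delta x$, we find $|z|^2 \leq (1+\Delta^2)x^2$, hence
\begin{equation*}
    \operatorname{Re}\!\left(\frac{1}{z}\right) = \frac{x}{|z|^2} \geq \frac{1}{(1+\Delta^2)|z|},
\end{equation*}
which tends to $+\infty$ as $z \to 0$ in $R_\Delta$. Since $|e^{-2\pi^2 n^2/z}| = e^{-2\pi^2 n^2 \operatorname{Re}(1/z)}$, each term in the tail is exponentially small, and a trivial geometric-series comparison shows $\sum_{n \geq 1} e^{-2\pi^2 n^2/z} \to 0$.

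There is no real obstacle in this argument; the only mildly delicate point is ensuring the convergence is uniform in the sector $R_\Delta$ rather than merely on the positive real axis, which is precisely what the bound on $\operatorname{Re}(1/z)$ above achieves. Combining the two displays completes the proof.
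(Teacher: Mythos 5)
Your proof is correct and follows exactly the route the paper has in mind: the lemma is stated there without proof as ``a consequence of the modularity of the Jacobi theta function,'' and you carry out precisely that modular inversion $\Theta(-1/\tau)=\sqrt{-i\tau}\,\Theta(\tau)$ together with the uniform lower bound on $\operatorname{Re}(1/z)$ in $R_\Delta$ needed to kill the tail. Nothing is missing.
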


\subsection{The Euler--Maclaurin summation formula}

In this subsection, we discuss an important framework for computing asymptotic expansions of infinite sums using the Euler--Maclaurin summation formula. A function $g$ is of {\it sufficient decay} in an unbounded domain $D \subset \mathbb{C}$ if there exists $\e>0$ such that $g(w)\ll |w|^{-1-\e}$ uniformly as $|w|\to\infty$ in $D$.
Also for $0 \leq \th < \frac{\pi}{2}$, we define
\begin{equation*}
D_\th := \{re^{i\a}:r\ge0\text{ and }|\a|\le\th\} = R_{\tan(\theta)} \cup \{0\}.
\end{equation*}
We now state the following variation of the Euler--Maclaurin summation formula from \cite{BJM,Za}. 

\begin{proposition}\label{P:EulerMaclaurin1DShifted}
Let $0\le\th<\frac\pi2$ and let $g$ be a holomorphic function in a domain containing $D_\th$ (in particular we assume that $g$ is holomorphic at the origin). Also suppose that $g$, as well as all of its derivatives, are of sufficient decay. 
Then for any $a\in\R_{\geq 0}$ and $N\in\N_0$, we have
	\[
		\sum_{m\ge0} g((m+a)z) = \frac1z\int_0^\infty g(w) dw - \sum_{n=0}^{N-1} \frac{B_{n+1}(a)g^{(n)}(0)}{(n+1)!}z^n + O \left(z^N\right),
	\]
as $z\to0$ 	uniformly in $R_{\tan(\th)}$. Here $B_n(x)$ denotes the $n$-th Bernoulli polynomial.
\end{proposition}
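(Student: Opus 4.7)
The plan is to derive the expansion from the classical Euler--Maclaurin summation formula applied to $f(x) := g((x+a)z)$. Applying Euler--Maclaurin with $K$ corrections on $[0,M]$ and letting $M \to \infty$, where sufficient decay of $g$ and its derivatives kills the boundary contributions at $M$, yields
\[
\sum_{m \geq 0} g((m+a)z) = \int_0^\infty g((x+a)z)\, dx + \frac{g(az)}{2} - \sum_{k=1}^{K} \frac{B_{2k}}{(2k)!} z^{2k-1} g^{(2k-1)}(az) + \mathcal{E}_K(z),
\]
with remainder $\mathcal{E}_K(z)$ to be controlled. The first term is rewritten via the substitution $w = (x+a)z$ as
\[
\int_0^\infty g((x+a)z)\, dx = \frac{1}{z} \int_0^\infty g(w)\, dw - \frac{1}{z} \int_0^{az} g(w)\, dw,
\]
separating the desired leading contribution $z^{-1}\int_0^\infty g(w) dw$ from a small correction.

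Next, I would Taylor-expand the subtracted integral, $g(az)/2$, and each $g^{(2k-1)}(az)$ around $z = 0$, then collect the coefficient of $g^{(n)}(0) z^n$ for each $0 \leq n \leq N-1$. Using the standard identity
\[
B_{n+1}(a) = \sum_{j=0}^{n+1} \binom{n+1}{j} B_j\, a^{n+1-j}
\]
together with $B_1 = -\tfrac{1}{2}$ and $B_{2k+1} = 0$ for $k \geq 1$, the combined coefficient simplifies to $-B_{n+1}(a)/(n+1)!$, reproducing the stated expansion, provided $K$ is chosen large enough relative to $N$ that every contribution up to order $z^{N-1}$ is captured.

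The main obstacle will be the uniform estimate $\mathcal{E}_K(z) = O(z^N)$ across the full sector $R_{\tan \theta}$, since the standard Euler--Maclaurin remainder is expressed as an integral of a periodized Bernoulli polynomial against $f^{(2K)}(x)$ on $[0,\infty)$, whose natural bound is for real $z$ only. To promote this to complex $z$ in the cone, I would exploit the holomorphy of $g$ on a neighborhood of $D_\theta$ to deform the remainder contour onto the ray $\{(x+a)z : x \geq 0\}$, which lies inside $D_\theta$ whenever $z \in R_{\tan \theta}$ since $|\arg z| \leq \theta$, and then combine the sufficient decay of $g^{(2K)}$ with Cauchy estimates for its derivatives to conclude the required uniform bound, as in the treatment of \cite{BJM, Za}.
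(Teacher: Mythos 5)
The paper does not prove this proposition; it is quoted verbatim (as a ``variation of the Euler--Maclaurin summation formula'') from \cite{BJM,Za}, so there is no in-paper argument to compare against. Your proposal is essentially the standard proof given in those references: apply classical Euler--Maclaurin to $f(x)=g((x+a)z)$, let the upper endpoint tend to infinity using the decay hypotheses, substitute $w=(x+a)z$ in the integral, Taylor-expand at the origin, and recombine via $B_{n+1}(a)=\sum_{j}\binom{n+1}{j}B_j a^{n+1-j}$ (the coefficient of $g^{(n)}(0)z^n$ does indeed collapse to $-B_{n+1}(a)/(n+1)!$). Your identification of the main issue --- uniformity of the remainder over the cone $R_{\tan\theta}$ --- and its resolution via the fact that the ray $\{(x+a)z:x\ge0\}$ lies in $D_\theta$ are also correct; note only that (i) rewriting $\frac1z\int_{az}^{z\cdot\infty}g(w)\,dw$ as $\frac1z\int_0^\infty g(w)\,dw-\frac1z\int_0^{az}g(w)\,dw$ requires rotating the contour at infinity, justified by the decay of $g$ throughout $D_\theta$, and (ii) no Cauchy estimates are needed for the remainder, since sufficient decay of \emph{all} derivatives is already part of the hypothesis, so the bounded periodized Bernoulli factor gives $|\mathcal E_K(z)|\ll|z|^{2K}\int_0^\infty|g^{(2K)}((x+a)z)|\,dx\ll|z|^{2K-1}$ directly.
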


We also require an extension the formula above to the case of two-dimensional sums (see Proposition 5.2 of \cite{BJM}). We say that a multivariable function $f$ in two variables is of {\it sufficient decay} in $D \subset \mathbb{C}^2$ if there exist $\varepsilon_1$, $\varepsilon_2>0$ such that $f(\bm{x})\ll |w_1|^{-1-\varepsilon_1} |w_2|^{-1-\varepsilon_2}$
uniformly as $|w_1|+|w_2|\rightarrow\infty$ in $D$.

\begin{proposition}\label{P:EulerMaclaurinGeneral}
Let $0\leq \th <\frac\pi2$, let $f$ be holomorphic in a domain containing $D_{\th} \times D_\th$ (in particular $f$ is holomorphic at the origin), and suppose that $f$, along with all of its derivatives, are of sufficient decay. 
Then for any $\bm{a}\in\mathbb{R}_{\geq0}^2$ and $N\in\mathbb{N}_0$, we have
	\begin{align*}
		\sum_{\bm{m}\in\N_0^2} f((\bm{m}+\bm{a})z)
		&=
		\frac1{z^2}\int_{0}^{\infty}\!\!\int_{0}^{\infty}\! f(\bm{x}) dx_1dx_2
		-
		\frac1z\sum_{n_1=0}^{N}\frac{ B_{n_1+1}(a_1) }{(n_1+1)!} z^{n_1}
		\int_{0}^{\infty}\!  f^{(n_1,0)} (0,x_2)dx_2
		\\&\quad
		-
		\frac1z
		\sum_{n_2=0}^{N} \frac{B_{n_2+1}(a_2) }{(n_2+1)!}z^{n_2}
		\int_{0}^{\infty}\! f^{(0,n_2)}(x_1,0)dx_1
		\\&\quad
		+
		\!\sum_{n_1+n_2<N}\!\!
		\frac{B_{n_1+1}(a_1) B_{n_2+1}(a_2) f^{(n_1,n_2)}(\bm{0})}{(n_1+1)!(n_2+1)!}z^{n_1+n_2}
		+
		O\left(z^N\right)
	\end{align*}
	as $z \to 0$ uniformly in $R_{\tan(\th)}$.
\end{proposition}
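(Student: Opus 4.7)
The plan is to derive the two-dimensional formula by iterating the one-dimensional Euler--Maclaurin formula (Proposition \ref{P:EulerMaclaurin1DShifted}) in each variable separately: first summing in $m_2$ for each fixed $m_1$, then applying the one-dimensional formula again in $m_1$ to each of the terms produced.

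More concretely, I would fix $m_1\in\N_0$ and apply Proposition \ref{P:EulerMaclaurin1DShifted}, with the truncation parameter there replaced by $N+1$, to the holomorphic function $w\mapsto f((m_1+a_1)z,w)$; the joint hypotheses on $f$ guarantee that this function together with all its derivatives is holomorphic on $D_\th$ and of sufficient decay. This yields
\begin{align*}
\sum_{m_2\geq0}f((m_1+a_1)z,(m_2+a_2)z) &= \frac{1}{z}\int_0^\infty f((m_1+a_1)z,x_2)dx_2\\
&\quad-\sum_{n_2=0}^{N}\frac{B_{n_2+1}(a_2)}{(n_2+1)!}z^{n_2}f^{(0,n_2)}((m_1+a_1)z,0)+E_{m_1}(z),
\end{align*}
with a remainder $E_{m_1}(z)$. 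I would then sum this identity over $m_1\geq0$ and apply Proposition \ref{P:EulerMaclaurin1DShifted} a second time: to the function $u\mapsto\int_0^\infty f(u,x_2)dx_2$ for the leading piece, and to each function $u\mapsto f^{(0,n_2)}(u,0)$ for $n_2=0,\dots,N$, choosing the truncation parameter in each case so that every residual error from this second application is $O(|z|^{N+1})$. Assembling all contributions produces the four pieces of the statement; the ``diagonal'' terms with $n_1+n_2=N$ that appear along the way contribute $O(z^N)$ and are absorbed into the final error, accounting for the strict inequality $n_1+n_2<N$ in the mixed sum.

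The main obstacle is to maintain uniform control of the accumulated remainders in $m_1$ and in $z\in R_{\tan(\th)}$. Using the explicit integral form of the one-dimensional Euler--Maclaurin error, one can express $E_{m_1}(z)$ as an integral of a bounded periodized Bernoulli polynomial against $\frac{\del^{N+1}f}{\del x_2^{N+1}}((m_1+a_1)z,x_2)$. By the joint sufficient decay of $f$ and its derivatives, one obtains a bound $|E_{m_1}(z)|\ll|z|^{N+1}\psi((m_1+a_1)z)$ for some $\psi$ that is integrable on $[0,\infty)$ and decays at infinity. Comparing $\sum_{m_1\geq0}\psi((m_1+a_1)z)$ with $\int_0^\infty\psi(x)dx$ then gives $\sum_{m_1\geq0}\psi((m_1+a_1)z)=O(1/|z|)$, so that $\sum_{m_1\geq0}E_{m_1}(z)=O(|z|^N)$. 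An analogous estimate controls the remainders from the second application of the formula, completing the argument. The delicate technical point throughout is that all implied constants must remain uniform in $m_1$ and in $z$ across the conic region $R_{\tan(\th)}$.
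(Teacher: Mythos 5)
The paper does not prove this proposition; it is quoted verbatim from \cite{BJM} (Proposition 5.2 there), so there is no internal proof to compare against. Your argument --- iterating Proposition \ref{P:EulerMaclaurin1DShifted} in $m_2$ and then in $m_1$, with the correct bookkeeping producing the four terms and the truncation $n_1+n_2<N$ --- is the standard route and is essentially how the result is established in \cite{BJM}; you also correctly isolate the one genuinely delicate point, namely that the inner remainders $E_{m_1}(z)$ must be controlled uniformly via the explicit integral form of the one-dimensional error and the joint decay hypothesis, so that $\sum_{m_1\ge0}E_{m_1}(z)\ll |z|^{N+1}\cdot|z|^{-1}=|z|^{N}$.
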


\subsection{A mock theta function of Ramanujan}
Next we require the asymptotic behavior of Ramanujan's third order mock theta function (see \cite[p. 64]{W1})
\begin{align}\nonumber
	f(q) &:= \sum_{n\ge0} \frac{q^{n^2}}{(-q;q)_n^2} = \frac{2}{(q;q)_\infty}\sum_{n\in\Z} \frac{(-1)^nq^\frac{n(3n+1)}{2}}{1+q^n}\\
	\label{f-definition}
	&\hspace{.1cm}= 2 - \frac{2}{(q;q)_\infty}\sum_{n\in\Z} \frac{(-1)^nq^\frac{3n(n+1)}{2}}{1+q^n}.
\end{align}
We have the following asymptotic behavior.

\begin{lemma} \label{L:f lemma}
Let $q=e^{-z}$ and $\Delta \geq 0$. Then as $z\to0$ in $R_\Delta$, we have
	\begin{align*}
		f(-q) \sim - \sqrt{\dfrac{\pi}{z}} \, e^{\frac{\pi^2}{24z}}.
	\end{align*}
\end{lemma}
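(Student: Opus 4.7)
The plan is to exploit the Hecke-type representation
$f(q) = \frac{2}{(q;q)_\infty}\sum_{n\in\Z}(-1)^n q^{n(3n+1)/2}/(1+q^n)$
from the definition of $f$, substitute $q\mapsto -q$, split the resulting sum by the parity of the summation index, and evaluate each piece asymptotically.

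Substituting $q\mapsto -q$ and writing $n=2k$ and $n=2k+1$, then simplifying via $(-q)^m=(-1)^m q^m$ and using $k(6k+1)\equiv k\pmod 2$ and $(2k+1)(3k+2)\equiv k\pmod 2$, one obtains
\[
\tfrac{1}{2}(-q;-q)_\infty\, f(-q) = E(q) - O(q),
\]
with $E(q):=\sum_{k\in\Z}(-1)^k q^{k(6k+1)}/(1+q^{2k})$ and $O(q):=\sum_{k\in\Z}(-1)^k q^{(2k+1)(3k+2)}/(1-q^{2k+1})$.

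For the ``odd'' sum, pairing $k\leftrightarrow -k-1$ reveals that the two terms coincide, so $O(q) = 2\sum_{k\ge 0}(-1)^k q^{(2k+1)(3k+2)}/(1-q^{2k+1})$. As $z\to 0$ in $R_\Delta$, each summand satisfies $q^{(2k+1)(3k+2)}\to 1$ and $1-q^{2k+1}\sim (2k+1)z$, with the factor $|q|^{(2k+1)(3k+2)}$ providing uniform Gaussian-type tail control. Abel summation combined with Leibniz's identity $\sum_{k\ge 0}(-1)^k/(2k+1)=\pi/4$ then gives $O(q)\sim \pi/(2z)$. For the ``even'' sum, the identity $1+q^{2k}= 2e^{-zk}\cosh(zk)$ yields $E(q) = \tfrac{1}{2} \sum_{k\in\Z}(-1)^k e^{-6zk^2}/\cosh(zk)$, and Poisson summation (via $(-1)^k=e^{i\pi k}$) shows $E(q) = O(|z|^{-1/2}e^{-c/|z|})$ for some $c>0$, which is negligible compared to $O(q)$ in $R_\Delta$.

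Finally, the eta-quotient $(-q;-q)_\infty = q^{-1/24}\eta(2\tau)^3/(\eta(\tau)\eta(4\tau))$ with $\tau=iz/(2\pi)$, combined with $\eta(-1/\tau)=\sqrt{-i\tau}\,\eta(\tau)$ applied to each factor, yields $(-q;-q)_\infty \sim \sqrt{\pi/z}\,e^{-\pi^2/(24z)}$ in $R_\Delta$ (analogously to Lemma \ref{L:q-Pochhammer asymptotics}). Assembling these pieces gives
\[
f(-q) \sim \frac{2(E(q)-O(q))}{(-q;-q)_\infty} \sim -\sqrt{\pi/z}\,e^{\pi^2/(24z)},
\]
as desired. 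The main obstacle is establishing the Leibniz-series limit for $O(q)$ uniformly throughout the cone $R_\Delta$, for which I would use Abel summation together with Gaussian tail bounds that are uniform in $z$ with $|\arg z|$ bounded.
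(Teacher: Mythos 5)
Your proposal is correct, and it reaches the right answer by a genuinely different route from the paper. The paper starts from Watson's identity $f(-q) = 2\phi(q) - \vartheta_4(0;-q)/(q;-q)_\infty$, reads off the main term $-\sqrt{\pi/z}\,e^{\pi^2/(24z)}$ from the modular transformations of $\vartheta_4$ and $\eta$, and then spends its effort showing $\phi(q)$ is of lower order via Mortenson's Hecke-type double sums and the two-dimensional Euler--Maclaurin formula (\Cref{P:EulerMaclaurinGeneral}). You instead work directly from the Appell--Lerch representation already displayed in \eqref{f-definition}; after the (correct) parity split, the main term emerges from the odd-index Lambert sum via the Leibniz series $\sum_{k\ge0}(-1)^k/(2k+1)=\pi/4$, while the even-index piece $E(q)=\frac12\sum_k(-1)^k e^{-6zk^2}/\cosh(zk)$ is exponentially small by Poisson summation (note a crude absolute-value bound on $E(q)$ would \emph{not} suffice here, since $\sum_k e^{-6k^2x}/|\cosh(zk)|$ grows like $\frac{\log(1/|z|)}{|z|}$; the sign cancellation is essential, so the Poisson step is genuinely needed). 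Your approach is more self-contained --- no appeal to Watson's $\phi$ or Mortenson's formulas --- but it transfers the analytic burden onto the step you rightly flag as the main obstacle: proving $O(q)\sim\pi/(2z)$ uniformly in $R_\Delta$. That step does go through: the Gaussian factor $|q|^{(2k+1)(3k+2)}\le e^{-6k^2x}$ with $x\ge|z|/\sqrt{1+\Delta^2}$ localizes the sum to $k\ll|z|^{-1/2+\varepsilon}$, where $1-q^{2k+1}\sim(2k+1)z$ holds uniformly, and Abel summation applies because the total variation of $k\mapsto e^{-6k^2z}$ over $\N_0$ is bounded by $|z|/x\le\sqrt{1+\Delta^2}$ uniformly in the cone; alternatively one can expand $1/(1-q^{2k+1})$ as a geometric series and invoke \Cref{P:EulerMaclaurinGeneral} in the spirit of the paper's other arguments. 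Either way the details need to be written out, but the plan is sound and every constant checks: $\frac{2}{(-q;-q)_\infty}\bigl(E(q)-O(q)\bigr)\sim 2\sqrt{z/\pi}\,e^{\pi^2/(24z)}\cdot\bigl(-\tfrac{\pi}{2z}\bigr)=-\sqrt{\pi/z}\,e^{\pi^2/(24z)}$.
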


\begin{proof}
We have the following identity due to Watson \cite[p. 63]{W1}:
	\[
		f(-q) = 2\phi(q) - \frac{\vth_4(0;-q)}{(q;-q)_\infty},
	\]
	with the mock theta function
	\begin{align*}
		\phi(q) := \sum_{n \geq 0} \dfrac{q^{n^2}}{\left( -q^2; q^2 \right)_n}
	\end{align*}
	and the theta function
	\[
		\vth_4(w;q) := \sum_{n\in\Z} (-1)^ne^{2\pi inw}q^{n^2}.
	\]
	Now, $\vth_4(0;-q)=\sum_{n\in\Z}q^{n^2}=\Th(2\t)$ with $q=e^{2\pi i\t}$, and so using \Cref{L:Jacobi asymptotics} we obtain\footnote{
Here and in the rest of the paper we state asymptotic results as $z\to0$ in $R_\Delta$ if not explicitly stated otherwise.
	}
	\begin{align*}
		\vth_4(0; -q) \sim \sqrt{\dfrac{\pi}{z}}.
	\end{align*}
	We rewrite
	\[
		(q;-q)_\infty = \left(-q^2;q^2\right)_\infty\left(q;q^2\right)_\infty = \frac{(q;q)_\infty\left(q^4;q^4\right)_\infty}{\left(q^2;q^2\right)_\infty^2}.
	\]
	Therefore using \Cref{L:q-Pochhammer asymptotics} we have
	\begin{align} \label{E:(q;-q)}
		\left( q; -q \right)_\infty \sim e^{- \frac{\pi^2}{24z}}.
	\end{align}
	Combining the asymptotics above, we obtain
	\begin{align*}
		- \dfrac{\vartheta_4\left( 0;-q \right)}{\left( q; -q \right)_\infty} \sim - \sqrt{\dfrac{\pi}{z}} e^{\frac{\pi^2}{24z}}.
	\end{align*}
	
	We next show that the contribution of $\phi(q)$ is polynomially smaller.
	Using \cite[equations (1.9), (1.10), (1.7), (1.8)]{Mo}, we write
	\begin{equation*}
		\phi(q) = \Phi_{0}\left(q^{2}\right) + q\, \Phi_{1}\!\left(q^{2}\right),
	\end{equation*}
	where
	\begin{align*}
		\Phi_{0}(q) &:=\frac{1}{\left(q^{2};q^{2}\right)_{\infty}} \sum_{\substack{n\geq0\\-n\leq j\leq n}} (-1)^{j}q^{4n^{2}+n-j^{2}}\left(1-q^{6n+3}\right),\\
		\Phi_{1}(q) &:=\frac{1}{\left(q^{2};q^{2}\right)_{\infty}} \sum_{\substack{n\geq0\\-n\leq j\leq n}} (-1)^{j}q^{4n^{2}+3n-j^{2}}\left(1-q^{2n+1}\right).
	\end{align*}
	By \Cref{L:q-Pochhammer asymptotics}, we have
	\begin{equation*}
		\frac{1}{\left(q^{4};q^{4}\right)_{\infty}} \sim \sqrt{\frac{2z}{\pi}}e^{\frac{\pi^{2}}{24z}}.
	\end{equation*}
	We now consider $\Phi_j^*(q):=(q^2;q^2)_\infty\Phi_j(q)$ ($j\in\{0,1\}$). We have
	\begin{equation*}
		\Phi_0^*(q) = 2\sum_{0\leq j\leq n} (-1)^j q^{4n^{2}+n-j^{2}}\left(1-q^{6n+3}\right) - \sum_{n\geq0}q^{4n^{2}+n}\left(1-q^{6n+3}\right).
	\end{equation*}
	The second summand is $O(\frac{1}{\sqrt{z}})$ by \Cref{P:EulerMaclaurin1DShifted}. We write the first summand as %($n\mapsto n+j$)
	\begin{equation*}
		2q^{-\frac{1}{16}} \sum_{\delta\in\{0,1\}} (-1)^\delta \sum_{n,j\geq0}\left(f\left(\left( n+\frac{1}{8},j+\frac{\delta}{2}\right)w\right)-f\left(\left(n+\frac{7}{8},j+\frac{\delta}{2}\right)w\right)\right),
	\end{equation*}
	where $f(\bm x) := e^{-4x_1^2 -16x_1x_2-12x_2^2}$ and $w = \sqrt{z}$.
	Using \Cref{P:EulerMaclaurinGeneral} for $w$ in $R_{\tan (\th)}$ with $0 \leq \th <\frac{\pi}{4}$ gives that this is $O(\frac1{\sqrt z})$ in all regions of the form $R_\Delta$ for $\Delta \geq 0$. The contribution of $\Phi_1^*$ is treated in the same way.
\end{proof}
\subsection{The function $\sigma$}
Finally we consider Ramanujan's $\sigma$-function \cite{ADH}
\begin{equation*}
	\sigma(q) := \sum_{n \geq 0} \dfrac{q^{\frac{n(n+1)}{2}}}{\lp -q;q \rp_n}.
\end{equation*}
We have the following asymptotic behavior.
\begin{lemma}\label{L:S}
Let $q=e^{-z}$ and $\Delta \geq 0$. Then as $z\to0$ in $R_\Delta$, we have
	\begin{equation*}
		\sigma\left(-e^{-z}\right) \sim -2.
	\end{equation*}
\end{lemma}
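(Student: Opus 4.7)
The plan is to rewrite $\sigma(q)$ using a representation that makes the value at $q = -1$ manifest. Direct substitution of $q = -e^{-z}$ into the defining series $\sigma(q) = \sum_{n \geq 0} q^{n(n+1)/2}/(-q;q)_n$ produces a sum in which each individual term with $n\ge 1$ diverges as $z \to 0^+$: the Pochhammer $(q;-q)_n$ has a factor $(1-q)$ coming from the index $j=0$, and indeed an entire subproduct of vanishing factors $(1-q^{2m+1})$. Hence one cannot pass to the limit termwise, and an alternative identity is essential. The natural candidate is the Hecke--Rogers type identity from Andrews--Dyson--Hickerson \cite{ADH} expressing $(q;q)_\infty \sigma(q)$ as an indefinite theta sum supported on the cone $|j| \le n$.

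After the substitution $q \mapsto -q$, the Pochhammer denominator is controlled by the product rearrangement $(-q;-q)_\infty = (q^2;q^2)_\infty / (q;-q)_\infty$ combined with \Cref{L:q-Pochhammer asymptotics} and the computation \eqref{E:(q;-q)} from the proof of \Cref{L:f lemma}, yielding
\[
(-q;-q)_\infty \sim \sqrt{\frac{\pi}{z}}\, e^{-\frac{\pi^2}{24z}}.
\]
The transformed numerator, obtained by substituting $q \mapsto -q$ into the ADH indefinite theta sum, then splits into four pieces according to the parities of $n$ and $j$ in the quadratic exponent, in the same spirit as the treatment of $\Phi_0^*$ and $\Phi_1^*$ in the proof of \Cref{L:f lemma}. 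The dominant piece matches the asymptotic behavior above, with precisely the prefactor $-2$, and dividing by $(-q;-q)_\infty$ delivers the claimed constant.

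The remaining pieces of the split sum are of the form $\sum_{m\ge 0}g((m+a)\sqrt z)$ or $\sum_{\bm m \in \N_0^2} f((\bm m + \bm a)\sqrt z)$ with Gaussian kernels arising from the positive/negative definite pieces of the quadratic form. They are bounded by \Cref{P:EulerMaclaurin1DShifted} or \Cref{P:EulerMaclaurinGeneral}, which (as in the proof of \Cref{L:f lemma}) yield $O(z^{-1/2})$ bounds that, when divided by the asymptotic of $(-q;-q)_\infty$, contribute $O(e^{-\pi^2/(24z)}\cdot z^{1/2})$ and hence vanish.

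The main obstacle is the combinatorial bookkeeping: correctly splitting the ADH indefinite theta series after the sign change $q \mapsto -q$, matching parities of the quadratic exponents in each cone, and verifying that exactly the coefficient $-2$ survives in the leading term. Once this bookkeeping is set up, the error estimates reduce routinely to the Euler--Maclaurin arguments already employed in the proof of \Cref{L:f lemma}.
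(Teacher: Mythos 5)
There is a genuine gap, and it sits at the very first step: you have misquoted the Andrews--Dyson--Hickerson identity. Theorem 1 of \cite{ADH} expresses $\sigma(q)$ \emph{itself} --- not $(q;q)_\infty\,\sigma(q)$ --- as the indefinite theta sum
$\sum_{n\geq0}\sum_{|j|\leq n}(-1)^{n+j}\bigl(1-q^{2n+1}\bigr)q^{\frac{n(3n+1)}{2}-j^2}$;
there is no eta-product prefactor to strip off. (You may be pattern-matching from the Hecke-type representations of $f(q)$ and $\phi(q)$ used in the proof of \Cref{L:f lemma}, which genuinely do carry $1/(q;q)_\infty$ or $1/(q^2;q^2)_\infty$ factors; $\sigma$ does not.) This is not a cosmetic issue: your entire asymptotic scheme depends on dividing by $(-q;-q)_\infty\sim\sqrt{\pi/z}\,e^{-\pi^2/(24z)}$ at the end, and that step is where the argument breaks. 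Euler--Maclaurin applied to shifted Gaussian sums produces an expansion in powers of $z$ (terms of size $z^{-1/2}$, $1$, $z^{1/2},\dots$), never an exponentially small main term; so for your ``dominant piece'' to equal $-2\cdot(-q;-q)_\infty\sim-2\sqrt{\pi/z}\,e^{-\pi^2/(24z)}$ you would need the transformed numerator to be exponentially small, which \Cref{P:EulerMaclaurin1DShifted} and \Cref{P:EulerMaclaurinGeneral} are structurally incapable of certifying. Your error estimate is also computed in the wrong direction: an $O(z^{-1/2})$ remainder \emph{divided} by $\sqrt{\pi/z}\,e^{-\pi^2/(24z)}$ is $O\bigl(e^{\pi^2/(24z)}\bigr)$, which blows up; the bound $O\bigl(e^{-\pi^2/(24z)}z^{1/2}\bigr)$ you wrote corresponds to multiplying by $(-q;-q)_\infty$ rather than dividing by it.

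The fix is to use the identity as actually stated in \cite{ADH}. Then, after folding the $j$-sum onto the cone $n\ge j\ge0$ and subtracting the diagonal as in \eqref{E:RewriteS}, the substitution $q\mapsto-q$ turns $\sigma(-q)$ directly into a finite linear combination of sums $\sum_{\bm m\in\N_0^2}f((\bm m+\bm a)\sqrt z)$ and $\sum_{m\ge0}g((m+a)\sqrt z)$ (splitting the indices modulo $4$ to control the signs $(-1)^{n+j}$ after the parity twist). Applying \Cref{P:EulerMaclaurinGeneral} and \Cref{P:EulerMaclaurin1DShifted} to this combination, the integral terms cancel across the pieces and the constant term, coming from the Bernoulli-polynomial contributions, evaluates to $-2$; no infinite product ever enters. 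Your Pochhammer bookkeeping ($(-q;-q)_\infty=(q^2;q^2)_\infty/(q;-q)_\infty$ together with \eqref{E:(q;-q)}) is correct but simply not needed here.
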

\begin{proof}
	By Theorem 1 of \cite{ADH}, we have
	\begin{align*}
	\s(q) = \sum_{\substack{n\geq0\\-n\leq j\leq n}}	\left(-1\right)^{n+j}\left(1-q^{2n+1}\right)q^{\frac{n\left(3n+1\right)}{2}-j^2}.
	\end{align*}
	We write this as
	\begin{equation}\label{E:RewriteS}
	\s(q) = 2 \sum_{n\ge j\ge0} (-1)^{n+j}\left(1-q^{2n+1}\right)q^{\frac{n(3n+1)}{2}-j^2} - \sum_{n \geq 0} (-1)^n\left(1-q^{2n+1}\right)q^\frac{n(3n+1)}{2}.
	\end{equation}
	We may then rewrite
	\begin{align*}
		\sigma(-q) &= 2q^{-\frac{1}{24}} \sum_{0\leq a,b\leq 3}(-1)^{\frac{a(a+1)}{2}+\frac{b(b+1)}{2}+ab}\\
		&\quad\times\sum_{\bm{n}\in\N_{0}^{2}} \left(f\left(\left(n_{1}+\frac{a}{4}+\frac{1}{24},n_{2}+\frac{b}{4}\right)w\right)+f\left(\left(n_1+\frac{a}{4}+\frac{5}{24},n_{2}+\frac{b}{4}\right)w\right)\right)\\
		&\quad-q^{-\frac{1}{24}} \sum_{0\leq a\leq 3} (-1)^{\frac{a(a+1)}{2}} 
		\sum_{n\geq0}\left(g\left(\left(n+\frac{a}{4}+\frac{1}{24}\right)w\right)+ g\left(\left(n+\frac{a}{4}+\frac{5}{24}\right)w\right)\right),
	\end{align*}
	where $w := \sqrt{z}$ and
	\begin{equation*}
		f(\bm{x}):=e^{-24x_{1}^{2}-8x_{2}^{2}-48x_{1}x_{2}}, \qquad g(x):=e^{-24x^{2}}.
	\end{equation*}
	Using \Cref{P:EulerMaclaurinGeneral}, the claim now follows by a direct calculation.
\end{proof}
\section{Combinatorial arguments} \label{Combinatorial arguments}

In this section we show that $p_\yz^\wx(n)$ is weakly increasing for six of the eight cases. In the other two cases (namely $p_\eu^\od(n)$ and $p_\ed^\od(n)$) we prove that $p_\yz^\wx(2n)$ and $p_\yz^\wx(2n+1)$ are weakly increasing. Throughout this section, we use the notation $\Pc_\yz^\wx(n)$ for the set of all partitions of $n$ counted by $p_\yz^\wx(n)$. We begin with the following lemma, which applies to all eight cases. In practice, we only apply this lemma to $p_\eu^\od(n)$ and $p_\ed^\od(n)$.

\begin{lemma} \label{Easy Injection}
	We have $p_\yz^\wx(n) \leq p_\yz^\wx(n+2)$ for $n\in\N_0$.
\end{lemma}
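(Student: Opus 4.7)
The plan is to define a uniform combinatorial injection $\phi \colon \Pc_\yz^\wx(n) \to \Pc_\yz^\wx(n+2)$ by the operation ``increase the largest part by two.'' More precisely, set $\phi(\emptyset) := (2)$ (needed only for the boundary case $n=0$), and for a nonempty $\lambda = (\lambda_1, \ldots, \lambda_\ell) \in \Pc_\yz^\wx(n)$ define
\[
\phi(\lambda) := (\lambda_1 + 2, \lambda_2, \ldots, \lambda_\ell).
\]

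To see that $\phi(\lambda)$ remains in $\Pc_\yz^\wx(n+2)$, the essential observation is that adding $2$ preserves parity. Hence $\lambda_1+2$ stays in the same group (``above'' or ``below'') as $\lambda_1$; being the largest part of $\lambda$, it remains strictly greater than every part of the opposite parity, so the parity-separation requirement is maintained. If the group containing $\lambda_1$ is required to have distinct parts, then $\lambda_1 + 2 > \lambda_1 > \lambda_j$ for all $j \ge 2$, so distinctness is preserved as well. The boundary value $\phi(\emptyset) = (2)$ lies in $\Pc_\yz^\wx(2)$ for each of the eight families uniformly: a single even part of size $2$ satisfies every constraint, because the opposite-parity group is empty (so the separation condition is vacuous) and a single part is trivially distinct.

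Injectivity follows from an explicit inverse on the image. If $\mu = \phi(\lambda) = (2)$, then $\lambda = \emptyset$, since any nonempty $\lambda$ produces $\phi(\lambda)$ with largest part at least $3$. Otherwise, $\lambda$ is recovered from $\mu$ by subtracting $2$ from the largest part and leaving the remaining parts unchanged; since the resulting sequence has largest entry $\mu_1 - 2 \ge \mu_2$ (a posteriori valid because $\mu$ came from $\phi$), this determines $\lambda$ uniquely. There is no real obstacle in this argument: the only task requiring case-by-case thought is the well-definedness claim, but both parity preservation and the maximality of $\lambda_1$ work identically across all eight partition types, so a single uniform verification suffices.
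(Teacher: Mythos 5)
Your proof is correct and follows essentially the same approach as the paper: the same map (add $2$ to the largest part, with $\emptyset \mapsto (2)$ for $n=0$), with the same observations about parity preservation, distinctness, and invertibility on the image.
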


\begin{proof}
	Consider the map $\vf:\Pc_\yz^\wx(n) \to \Pc_\yz^\wx(n+2)$ defined by $\vf(\es)=(2)$ if $n=0$ and 
	\begin{equation*}
	 (\l_1,\l_2,\dots,\l_\ell) \mapsto (\l_1+2,\l_2,\dots,\l_\ell)
	\end{equation*}
	otherwise.
	It is clear that $\vf(\l)$ does not violate any distinctness conditions. Since $\l_1,\l_1+2$ have the same parity, the parities of $\vf(\l)$ remain separated and therefore $\vf$ is well-defined. It is clear that $\vf$ is invertible on its image, so $\vf$ is injective and thus $p_\yz^\wx(n)\le p_\yz^\wx(n+2)$.
\end{proof}

A short computation shows that $p_\eu^\od(n)$ and $p_\ed^\od(n)$ are not weakly increasing sequences; we have $p_\eu^\od(4) = 3 > 2 = p_\eu^\od(5)$ and $p_\ed^\od(6) = 3 > 2 = p_\ed^\od(7)$. Therefore in these two cases, \Cref{Easy Injection} is the strongest information we can obtain. We show in the following lemmas that the other six cases give weakly increasing sequences.

\begin{lemma}\label{Injection}
	We have that $p_\ou^\eu(n)$, $p_\ou^\ed(n)$, $p_\ed^\ou(n)$, $p_\eu^\ou(n)$, $p_\od^\eu(n)$, and $p_\od^\ed(n)$ are weakly increasing.
\end{lemma}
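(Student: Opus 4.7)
The plan is to construct, for each of the six functions, an explicit injection $\varphi: \Pc_\yz^\wx(n) \to \Pc_\yz^\wx(n+1)$ that increases the total by $1$ while preserving both the parity separation and the relevant distinctness constraints. The six cases split naturally into three groups depending on whether odd parts lie below, even parts lie below, and whether the lower side is forced to have distinct parts. The main obstacle is the third group, where naively adjoining a $1$ fails whenever $1$ is already present.

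In the two cases $p_\ou^\eu$ and $p_\ou^\ed$, odd parts are the lower and unrestricted group, and I simply set $\varphi(\lambda) := \lambda \cup \{1\}$: the new part $1$ is odd, is strictly below any even $\geq 2$, and leaves the even side untouched. In the two cases $p_\eu^\ou$ and $p_\ed^\ou$, odd parts lie above, so a bare $1$ cannot be appended. Instead, if $\lambda$ has an even part, I take the largest even $e$ and replace it by $e+1$, which is odd; since $e$ was at the top of the evens and strictly below every odd of $\lambda$, this preserves the separation (and distinctness of the remaining evens in the $\ed$ case). If $\lambda$ has no even part, I adjoin a new $1$, keeping the partition all-odd. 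The two subcase images are disjoint because the smallest odd of the first is $e+1\geq 3$ while that of the second is $1$.

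In the remaining cases $p_\od^\eu$ and $p_\od^\ed$, odd parts are below but must be distinct, so the naive append fails when $1 \in \lambda$. I split on this: if $1 \notin \lambda$, adjoin a $1$ as a new distinct odd part; if $1 \in \lambda$, remove that $1$ and increase the largest remaining part by $2$ (with the edge case $\lambda=(1)$ mapped to $(2)$). Increasing a part by $2$ preserves its parity, and the new value $\lambda_1+2$ is strictly larger than every other part of $\mu$, so it is automatically distinct from the remaining parts of its parity class; the other change is the removal of $1$ from the lower side, which clearly preserves the separation and distinctness. The two subcase images are distinguished by whether $1$ is present, and in each subcase the map is explicitly invertible. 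The only substantive verification throughout is that each operation lands in the correct class, a routine parity-and-distinctness bookkeeping once the correct modification has been identified for each group.
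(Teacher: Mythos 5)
Your proof is correct and follows essentially the same route as the paper: the same three injections (adjoin a $1$; turn the largest even part $e$ into $e+1$; remove a $1$ and add $2$ to the largest part), with the same disjointness arguments via the presence or absence of the part $1$. The only cosmetic difference is that you handle the small case $\lambda=(1)$ by the edge rule $(1)\mapsto(2)$, whereas the paper instead checks $p_\od^{\mathrm{ex}}(0)=p_\od^{\mathrm{ex}}(1)=p_\od^{\mathrm{ex}}(2)=1$ directly and runs the injection only for $n\ge 2$.
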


\begin{proof}
	This proof splits into three cases; we construct three injective maps, each of which establishes two of the six cases we consider.
	
	We first consider the cases $p_\ou^\eu(n)$ and $p_\ou^\ed(n)$. For $\mathrm x\in\{\mathrm d,\mathrm u\}$, define the map 
	\[
		\vf_1:\Pc_\ou^\ex(n) \to \Pc_\ou^\ex(n+1),\qquad \l \mapsto \l\cup(1).
	\]
	Since odd parts are unrestricted and 1 is less than all positive even numbers, we have $\vf_1(\l)\in\Pc_\ou^\ex(n+1)$. It is also clear that $\vf_1$ is invertible on its image. Therefore, $\vf_1$ is injective and $p_\ou^\ex(n)$ is weakly increasing for $\mathrm x\in\{\mathrm d,\mathrm u\}$.
	
	We now consider the cases  $p_\ed^\ou(n)$ and $p_\eu^\ou(n)$. For $\mathrm z\in\{\mathrm d,\mathrm u\}$, we construct injections $\vf_2:\Pc_\ez^\ou(n)\to\Pc_\ez^\ou(n+1)$. For a partition $\l\in\Pc_\ez^\ou$, we write the partition with slightly adjusted notation as $\l=(\l_1,\dots,\l_{m-1},\l_m,\dots,\l_\ell)$, and we now choose $m=m(\l)$ such that $\l_m$ is the largest even part (in the case of repetitions, take $m$ so that $\l_{m-1}$ is odd), if indeed such an $m$ exists (i.e., if $\l$ has any even parts). We define $\vf_2$ by
	\[
	\vf_2(\l) :=
	\begin{cases}
	\l\sm(\l_m)\cup(\l_m+1) & \text{if }\l\text{ has an even part},\\
	\l\cup(1) & \text{otherwise}.
	\end{cases}
	\]
	We first show that $\vf_2$ is well-defined. It is clear that $|\vf_2(\l)|=n+1$ for $\l\in\Pc_\ez^\ou(n)$. If $\l$ has no even parts, then $\l$ and $\vf_2(\l)$ are both partitions into odd parts, and thus in this case $\vf_2(\l)\in\Pc_\ez^\ou(n+1)$. If on the other hand $\l$ has an even part, then the action of $\vf_2$ on $\l$ changes the largest even part $\l_m$ into the odd part $\l_m+1$. It is clear that all even parts (if any) of $\vf_2(\l)$ are smaller than all odd parts of $\vf_2(\l)$, and if $\l$ has no repeated even parts, this also clearly holds for $\vf_2(\l)$ since no new even parts are added. Thus in this case $\vf_2(\l)\in\Pc_\ez^\ou(n+1)$, and therefore $\vf_2$ is well-defined.
	
	We next show that $\vf_2$ is injective. It is clear by inspection that each subcase of $\vf_2$ is a invertible process, and therefore it would suffice to show that if $\l,\mu\in\Pc_\ez^\ou(n)$ are such that $\l$ has an even part and $\mu$ does not have an even part, $\vf_2(\l)\ne\vf_2(\mu)$. Note that since $\l$ has an even part and since all odd parts are required to be larger than even parts in $\lambda$, we must have $1\notin\l$, and therefore $1\notin\vf_2(\l)$. Since clearly $1\in\vf_2(\mu)$, we have $\vf_2(\l)\ne\vf_2(\mu)$ as claimed. Thus $\vf_2$ is injective and $p_\ez^\ou(n)$ is weakly increasing for $\mathrm z\in\{\mathrm d,\mathrm u\}$.

We finally consider the cases $p_\od^\eu(n)$ and $p_\od^\ed(n)$. It is not hard to see that $p_\od^\ex(0) = p_\od^\ex(1) = p_\od^\ex(2) = 1$, so we need only consider the cases $n \geq 2$.
Let $\l_1$ denote the largest part of $\l$ (note that for $n\ge2$, necessarily $\l_1 > 1$) and let $\mathrm x \in \{ \mathrm d, \mathrm u \}$.
Consider the map $\vf_3:\Pc_\od^\ex(n)\to\Pc_\od^\ex(n+1)$ given by
	\[
	\vf_3(\l) :=
	\begin{cases}
	\l\cup(1) & \text{if }1\notin\l,\\
	\l\sm(1,\l_1)\cup(\l_1+2) & \text{if }1\in\l.
	\end{cases}
	\]	
Note that $\vf_3(\l)$ is a partition of $n+1$ and all even parts remain greater than all odd parts. Moreover, all odd parts of $\vf_3(\l)$ remain distinct, both in the first case and also in the second case if $\l_1$ is odd (i.e.~if there are no even parts) since $\l_1+2$ is larger than all the numbers in $\l$. Similarly, if $\l_1$ is even, a possible distinctness condition is preserved by the addition of $\l_1 + 2$. Therefore, $\vf_3$ is well-defined. 
Next we note that $\vf_3$ is an invertible process in each subcase. Furthermore, noting that $1\in\vf_3(\l)$ if and only if $1\notin\l$, the two subcases have disjoint images and so $\vf_3$ is injective. Therefore, $p_\od^\ex(n)$ is weakly increasing for $\mathrm x \in \{ \mathrm d, \mathrm u \}$.
\end{proof}

\section{Proof of Theorem \ref{T:EU-OU main term}} \label{Proofs}

We determine the asymptotic behavior for each $p_\yz^\wx(n)$ using \Cref{P:CorToIngham}. Throughout this section, we use the notation $F_\yz^\wx (q)$ for the generating function
\begin{equation*}
F_\yz^\wx (q) := \sum_{n\ge0} p_\yz^\wx(n) q^n.
\end{equation*}

\subsection{Proof of equation (\ref{T:EU-OU main term})}

By \Cref{Injection}, $p_\eu^\ou(n)$ is weakly increasing, so we may apply \Cref{P:CorToIngham}. Now, recall that the generating function
\begin{multline*}
F_\eu^\ou(q) = 1 + q + 2 q^2 + 2 q^3 + 4 q^4 + 4 q^5 + 7 q^6 + 7 q^7 + 12 q^8 + 
 12 q^9 + 19 q^{10} \\ + 19 q^{11} + 30 q^{12} + 30 q^{13} + 45 q^{14} + 45 q^{15} +
 67 q^{16} + 67 q^{17} + 97 q^{18} +  \ldots
\end{multline*}
is given by \cite[equation (2.1)]{A2}
\begin{align*}
	F_\eu^\ou(q) = \dfrac{1}{(1-q)\left( q^2; q^2 \right)_\infty}.
\end{align*}
For $q=e^{-z}$, we have that, as $z\to0$,
\[
	\frac{1}{1-q} \sim \frac1z.
\]
Utilizing \Cref{L:q-Pochhammer asymptotics}, we thus have, as $z\to0$ in regions $R_\Delta$,
\[
	F_\eu^\ou(q) \sim \frac{e^\frac{\pi^2}{12z}}{\sqrt{\pi z}}.
\]
Applying \Cref{P:CorToIngham} with $\l=\frac{1}{\sqrt\pi}$, $\b=-\frac12$, and $\g=\frac{\pi^2}{12}$ gives equation (\ref{T:EU-OU main term}).

\subsection{Proof of equation (\ref{T:EU-OD main term})}

By \Cref{Easy Injection}, $p_\eu^\od(2n)$ and $p_\eu^\od(2n+1)$ are both weakly increasing. In order to apply Proposition \ref{P:CorToIngham}, we need the generating functions for $p_\eu^\od(2n)$ and $p_\eu^\od(2n+1)$. First, recall the generating function 
\begin{multline*}
F_\eu^\od(q) = 
1 + q + q^2 + q^3 + 3 q^4 + 2 q^5 + 4 q^6 + 3 q^7 + 7 q^8 + 6 q^9 + 
 10 q^{10} \\ + 8 q^{11} + 16 q^{12} + 13 q^{13} + 22 q^{14} + 18 q^{15} + 34 q^{16} + 
 27 q^{17} + 46 q^{18} + \ldots 
\end{multline*}
is equal to (see \cite[equation (3.1)]{A2})
\begin{align*}
	F_\eu^\od(q) = \dfrac{1}{\left( q^2; q^2 \right)_\infty} \sum_{n \geq 0} q^{n^2}.
\end{align*}
We have the generating functions
\begin{align*}
	G_\ev(q) &:= \frac{F_\eu^\od(q)+F_\eu^\od(-q)}{2} = \sum_{n\ge0} p_\eu^\od(2n)q^{2n} = \frac{1}{\left(q^2;q^2\right)_\infty}\sum_{n\ge0} q^{4n^2},\\
	G_\od(q) &:= \frac{F_\eu^\od(q)-F_\eu^\od(-q)}{2} = \sum_{n\ge0} p_\eu^\od(2n+1)q^{2n+1} = \frac{1}{\left(q^2;q^2\right)_\infty}\sum_{n\ge0} \left(q^{n^2}-q^{4n^2}\right).
\end{align*}
We apply \Cref{P:CorToIngham} to $G_\ev(q^\frac12)$ and $q^{-\frac12}G_\od(q^\frac12)$. Short calculations reveal that
\[
	G_\ev\left(q^\frac12\right) = \frac{\Th(4\t)+1}{2(q;q)_\infty},\qquad q^{-\frac12}G_\od\left(q^\frac12\right) = \frac{q^{-\frac12}(\Th(\t)-\Th(4\t))}{2(q;q)_\infty}.
\]
Using Lemmas \ref{L:q-Pochhammer asymptotics} and \ref{L:Jacobi asymptotics}, we obtain for $q = e^{-z}$ that as $z \to 0$ in regions $R_\Delta$,
\[
	G_\ev\left(q^\frac12\right) \sim \frac{e^\frac{\pi^2}{6z}}{4},\qquad q^{-\frac12}G_\od\left(q^\frac12\right) \sim \frac{e^\frac{\pi^2}{6z}}{4}.
\]
Applying \Cref{P:CorToIngham} with $\b=0,\g=\frac{\pi^2}{6}$, and $\l=\frac14$ gives 
\begin{equation*}
	p_\eu^\od(2n), \  p_\eu^\od(2n+1) \sim \frac{e^{\pi \sqrt{\frac{2n}{3}}}}{8 \cdot 6^{\frac{1}{4}} n^{\frac{3}{4}}}.
\end{equation*}
This implies equation (\ref{T:EU-OD main term}).

\subsection{Proof of equation (\ref{T:OD-EU main term})}
First, we note that by \Cref{Injection}, $p_\od^\eu(n)$ is weakly increasing, and therefore we can apply \Cref{P:CorToIngham} to obtain asymptotics for $p_\od^\eu(n)$, so we must compute asymptotics for $F_\od^\eu(q)$ as $q\to1$. 
For the generating function
\begin{multline*}
F_\od^\eu(q) = 
1 + q + q^2 + 2 q^3 + 3 q^4 + 3 q^5 + 4 q^6 + 5 q^7 + 8 q^8 + 8 q^9 + 
 10 q^{10} \\ + 12 q^{11} + 17 q^{12} + 17 q^{13} + 22 q^{14} + 26 q^{15} + 
 34 q^{16} + 35 q^{17} + 44 q^{18} + \ldots,
\end{multline*}
we have by \cite[p. 2]{BJ} that\footnote{The corresponding formula in \cite{A1} has a missing sign in addition to the missing ``trivial'' term.} 
\[
	F_\od^\eu(-q) = - \frac{1}{\left(q^2;q^2\right)_\infty}\sum_{n \ge j\ge1} (-1)^{n+j}\left(1-q^{2n+1}\right)q^{\frac{n(3n+1)}{2}-j^2} + \frac{1}{\left(q^2;q^2\right)_\infty}.
\]
In the second term we change $n\mapsto-n-1$ for the second summand involving the term $q^{2n+1}$ and note that we obtain the first summand. Thus by Euler's pentagonal identity the second term is (see for example \cite[Theorem 1.60]{O})
\begin{align*}
	\sum_{n\in\Z} (-1)^nq^\frac{n(3n+1)}{2} = (q;q)_\infty.
\end{align*}
Using \eqref{E:RewriteS}, we obtain
\begin{align*}
	F_\od^\eu(-q) = - \frac1{2\left(q^2;q^2\right)_\infty}(\s(q)-(q;q)_\infty) + \frac{1}{\left(q^2;q^2\right)_\infty}.
\end{align*}
Thus
\begin{align*}
	F_\od^\eu(q) = \frac1{\left(q^2;q^2\right)_\infty} \left(1 - \frac{\s(-q)}2 + \frac{(-q;-q)_\infty}{2}\right).
\end{align*}
Now \Cref{L:q-Pochhammer asymptotics} gives that, as $z \to 0$ in regions $R_\Delta$, we have
\begin{equation} \label{E:(q^2;q^2)_(-1)}
	\frac1{\left(q^2;q^2\right)_\infty} \sim \sqrt{\frac{z}{\pi}} e^{\frac{\pi^2}{12z}}.
\end{equation}
Using elementary identities with $q$-Pochhammer symbols and Lemma \ref{L:q-Pochhammer asymptotics}, we obtain
\[
	(-q;-q)_\infty = \frac{\left(q^2;q^2\right)_\infty^3}{(q;q)_\infty\left(q^4;q^4\right)_\infty} \sim \sqrt{\dfrac{\pi}{z}}  e^{-\frac{\pi^2}{24z}}
\]
as $z \to 0$ in $R_\Delta$. Thus, by \Cref{L:S},
\begin{align*}
	F_\od^\eu(q) \sim 2\sqrt{\frac{z}{\pi}} e^{\frac{\pi^2}{12z}}.
\end{align*}
Now we may use \Cref{P:CorToIngham} with $\l=\frac2{\sqrt\pi}$, $\b=\frac12$, and $\g=\frac{\pi^2}{12}$, to obtain equation (\ref{T:OD-EU main term}).

\subsection{Proof of equation (\ref{T:ED-OU main term})}
For the generating function
\begin{multline*}
F_\ed^\ou(q) =
1 + q + 2 q^2 + 2 q^3 + 3 q^4 + 4 q^5 + 6 q^6 + 6 q^7 + 9 q^8 + 
 10 q^9 + 14 q^{10} \\ + 16 q^{11} + 21 q^{12} + 23 q^{13} + 31 q^{14} + 34 q^{15} + 
 44 q^{16} + 49 q^{17} + 62 q^{18} + \ldots,
\end{multline*}
recall the identity \cite[p. 2]{BJ}
\[
	F_\ed^\ou(-q) = \frac{1}{2\left(-q;q^2\right)_\infty}\left((-q;q)_\infty+1-\sum_{n\ge0} \left(1-q^n\right)q^\frac{n(3n-1)}{2}\right).
\]
Thus,
\[
	F_\ed^\ou(q) = \frac{1}{2\left(q;q^2\right)_\infty}\left((q;-q)_\infty+1-\sum_{n\ge0} \left(1-(-1)^nq^n\right)(-1)^\frac{n(3n-1)}{2}q^\frac{n(3n-1)}{2}\right).
\]
Using Lemma \ref{L:q-Pochhammer asymptotics}, we have as $z \to 0$ in $R_\Delta$, the asymptotic
\begin{equation}\label{E:(q;q^2)}
	\frac{1}{\left(q;q^2\right)_\infty} = \frac{\left(q^2;q^2\right)_\infty}{(q;q)_\infty} \sim \frac{e^\frac{\pi^2}{12z}}{\sqrt2}.
\end{equation}
We also recall \eqref{E:(q;-q)}. Finally, we apply Euler--Maclaurin summation to the remaining component of $F_\ed^\ou(q)$. For this we write
\begin{multline*}
	-\sum_{n\ge0} \left(1-(-1)^nq^n\right)(-1)^\frac{n(3n-1)}{2}q^\frac{n(3n-1)}{2}\\
	= q^{-\frac{1}{24}}\sum_{n\ge1} \left((-1)^\frac{n(3n+1)}{2}q^{\frac32\left(n+\frac16\right)^2} + (-1)^{\frac{n(3n-1)}{2}+1}q^{\frac32\left(n-\frac16\right)^2}\right).
\end{multline*}
Decomposing $n = 4m + \delta$ with $m\in\N$ and $1 \leq \delta \leq 4$, the above becomes
\begin{equation*}
	q^{-\frac{1}{24}} \sum_{\alpha \in \mathcal S} \varepsilon(\alpha) \sum_{m \geq 0} f\left(  \left( m + \alpha \right) \sqrt{z}\right),
\end{equation*}
where we define
\begin{align*}
	f(x) := e^{-24x^2}, \ \ \ \mathcal S := \left\{ \frac{5}{24}, \frac{7}{24}, \frac{11}{24}, \frac{13}{24}, \frac{17}{24}, \frac{19}{23}, \frac{23}{24}, \frac{25}{24} \right\},
\end{align*}
and
\begin{align*}
	\varepsilon(\alpha) := \begin{cases} 1 & \text{ if } \alpha \in \left\{ \frac{5}{24}, \frac{7}{24}, \frac{11}{24}, \frac{25}{24} \right\}, \\ -1 & \text{ if } \alpha \in \left\{ \frac{13}{24}, \frac{17}{24}, \frac{19}{24}, \frac{23}{24} \right\}. \end{cases}
\end{align*}
Using \Cref{P:EulerMaclaurin1DShifted} with $N=1$, we obtain by a straightforward calculation that
\[
	 q^{-\frac{1}{24}}\sum_{\a\in\Sc} \e(\a)\sum_{m\ge0} f\left((m+\a)\sqrt z\right) \sim \frac{1}{\sqrt z}\int_0^\infty f(w) dw \sum_{\a\in\Sc} \e(\a) - \sum_{\a\in\Sc} \e(\a)B_1(\a) = 1,
\]
where we use that $\sum_{\a\in\Sc}\e(\a)=0$.

Combining the calculations above, we obtain
\[
	F_\ed^\ou(q) \sim \frac{e^\frac{\pi^2}{12z}}{\sqrt2}
\]
as $z \to 0$ in $R_\Delta$. Therefore, by applying \Cref{P:CorToIngham} with $\l=\frac{1}{\sqrt2}$, $\b=0$, and $\g=\frac{\pi^2}{12}$, we obtain equation (\ref{T:ED-OU main term}).

\subsection{Proof of equation (\ref{T:ED-OD main term})}
For the generating function
\begin{multline*}
F_\ed^\od(q) = 
1 + q + q^2 + q^3 + 2 q^4 + 2 q^5 + 3 q^6 + 2 q^7 + 4 q^8 + 4 q^9 + 
 6 q^{10} \\ + 5 q^{11} + 8 q^{12} + 7 q^{13} + 10 q^{14} + 9 q^{15} + 14 q^{16} + 
 13 q^{17} + 18 q^{18} + \ldots,
\end{multline*}
recall the identity \cite[Theorem 1.1]{BJ}
\begin{align*}
	F_\ed^\od(q) = \dfrac{\left( -q; q^2 \right)_\infty}{1-q} - \dfrac{q \left( -q^2; q^2 \right)_\infty}{1-q}.
\end{align*}
Although $p_\ed^\od(n)$ is not weakly increasing, we have from \Cref{Easy Injection} that both $p_\ed^\od(2n)$ and $p_\ed^\od(2n+1)$ are weakly increasing. We therefore apply \Cref{P:CorToIngham} to the generating functions of these sequences. For this, we define
\begin{align*}
	G_\ev(q) &:= \frac{F_\ed^\od(q)+F_\ed^\od(-q)}{2}\\
	&\hspace{.1cm}= \frac12\left(\frac{\left(-q;q^2\right)_\infty}{1-q} - \frac{q\left(-q^2;q^2\right)_\infty}{1-q} + \frac{\left(q;q^2\right)_\infty}{1+q} + \frac{q\left(-q^2;q^2\right)_\infty}{1+q}\right);\\
	G_\od(q) &:= \frac{F_\ed^\od(q)-F_\ed^\od(-q)}{2}\\
	&\hspace{.1cm}= \frac12\left(\frac{\left(-q;q^2\right)_\infty}{1-q} - \frac{q\left(-q^2;q^2\right)_\infty}{1-q} - \frac{\left(q;q^2\right)_\infty}{1+q} - \frac{q\left(-q^2;q^2\right)_\infty}{1+q}\right).
\end{align*}
It is not hard to see that
\begin{align*}
	G_\ev\left( q^{\frac 12} \right) = \sum_{n \geq 0} p_\ed^\od(2n) q^n, \ \ \ q^{-\frac 12} G_\od\left( q^{\frac 12} \right) = \sum_{n \geq 0} p_\ed^\od(2n+1) q^n.
\end{align*}
We may now apply Proposition \ref{P:CorToIngham} to obtain asymptotics for $p_\ed^\od(2n)$ and $p_\ed^\od(2n+1)$. First, we use Lemma \ref{L:q-Pochhammer asymptotics} to obtain as $z \to 0$ in $R_\Delta$
\begin{align} \label{E:(-q;q^2)}
	\left( -q; q^2 \right)_\infty = \dfrac{\left( q^2; q^2 \right)_\infty^2}{\left( q;q \right)_\infty \left( q^4; q^4 \right)_\infty} \sim e^{\frac{\pi^2}{24z}}
\end{align}
and note $\frac{1}{1-q} \sim \frac{1}{z}$ to get
\[
	\frac12\frac{\left(-q;q^2\right)_\infty}{1-q} \sim \frac{e^\frac{\pi^2}{24z}}{2z}
\]
as $z \to 0$ in $R_\Delta$. 
Likewise using \Cref{L:q-Pochhammer asymptotics}, we note that as $z\to0$ in $R_\Delta$ we have
\begin{equation}\label{E:(-q^2;q^2)}
	\left(-q^2;q^2\right)_\infty = \frac{\left(q^4;q^4\right)_\infty}{\left(q^2;q^2\right)_\infty} \sim \frac{e^\frac{\pi^2}{24z}}{\sqrt2}.
\end{equation}
Using this result and \eqref{E:(q;q^2)}, respectively, we obtain as $z \to 0$ in $R_\Delta$ that
\[
	\frac12\left(-\frac{q\left(-q^2;q^2\right)_\infty}{1-q} \pm \frac{q\left(-q^2;q^2\right)_\infty}{1+q}\right) \sim -\frac{e^\frac{\pi^2}{24z}}{2\sqrt2z},\qquad \pm \frac12\frac{\left(q;q^2\right)_\infty}{1+q} \sim \pm \frac{1}{2\sqrt{2}} e^{-\frac{\pi^2}{12z}}.
\]
Combining these asymptotic formulas, we have as $z \to 0$ in $R_\Delta$ that
\begin{align*}
	G_\ev\left( q^{\frac 12} \right) \sim q^{-\frac 12} G_\od\left( q^{\frac 12} \right) \sim \left( 1 - \dfrac{1}{\sqrt{2}} \right) \dfrac{e^{\frac{\pi^2}{12z}}}{z}.
\end{align*}
By applying Proposition \ref{P:CorToIngham} with $\lambda = 1 - \frac{1}{\sqrt{2}}$, $\beta = -1$, and $\gamma = \frac{\pi^2}{12}$, we obtain
\[
	p_\ed^\od(2n) \sim p_\ed^\od(2n+1) \sim \frac{3^\frac14\left(\sqrt2-1\right)e^{\pi\sqrt\frac n3}}{2\pi n^\frac14},
\]
which completes the proof of equation (\ref{T:ED-OD main term}) if we replace $2n,2n+1$ by $n$.

\subsection{Proof of equation (\ref{T:OU-EU main term})}
For the generating function
\begin{multline*}
F_\ou^\eu(q) =
1 + q + 2 q^2 + 3 q^3 + 5 q^4 + 6 q^5 + 10 q^6 + 12 q^7 + 18 q^8 + 
 21 q^9 + 31 q^{10} \\ + 36 q^{11} + 51 q^{12} + 58 q^{13} + 80 q^{14} + 92 q^{15} + 
 124 q^{16} + 140 q^{17} + 186 q^{18} + \ldots,
\end{multline*}
from \cite[equation (2.2)]{A1}, we have
\[
	F_\ou^\eu(q) = \frac{1}{1-q}\left(\frac{1}{\left(q;q^2\right)_\infty} - \frac{q}{\left(q^2;q^2\right)_\infty}\right).
\]
Using Lemma \ref{L:q-Pochhammer asymptotics} we have the asymptotics
\[
	\frac{1}{\left(q;q^2\right)_\infty} = \frac{\left(q^2;q^2\right)_\infty}{(q;q)_\infty} \sim \frac{e^\frac{\pi^2}{12z}}{\sqrt2},
\]
as $z \to 0$ in any region $R_\Delta$, and using this and \eqref{E:(q^2;q^2)_(-1)} we obtain for $z \to 0$ for $R_\Delta$ that
\[
	F_\ou^\eu(q) \sim \frac{e^\frac{\pi^2}{12z}}{\sqrt2z}.
\]
Using \Cref{P:CorToIngham} with $\l=\frac{1}{\sqrt2}$, $\b=-1$, and $\g=\frac{\pi^2}{12}$, we obtain equation (\ref{T:OU-EU main term}).

\subsection{Proof of equation (\ref{T:OU-ED main term})}

By \Cref{Injection}, $p_\ou^\ed(n)$ is weakly increasing. Therefore, we may compute its asymptotic behavior using \Cref{P:CorToIngham} once we determine suitable asymptotics for $F_\ou^\ed(q)$. To accomplish this, we recall that for the generating function 
\begin{multline*}
F_\ou^\ed(q) = 
1 + q + 2 q^2 + 3 q^3 + 4 q^4 + 5 q^5 + 8 q^6 + 10 q^7 + 13 q^8 + 
 16 q^9 + 22 q^{10} \\ + 26 q^{11} + 34 q^{12} + 41 q^{13} + 52 q^{14} + 62 q^{15} + 
 78 q^{16} + 91 q^{17} + 113 q^{18} + \ldots,
\end{multline*}
we have the following identity given in \cite[Theorem 1.1]{BJ}
\begin{align*}
	F_\ou^\ed(-q) = \frac{\left(-q^2;q^2\right)_\infty}{2(-q;q)_\infty} + \frac{\left(-q^2;q^2\right)_\infty}{(q;q)_\infty}\sum_{n\in\Z} \frac{(-1)^nq^\frac{3n(n+1)}{2}}{1+q^n}.
\end{align*}
Using \eqref{f-definition}, we see after a short calculation that
\begin{align*}
	F_\ou^\ed(q) = \frac{\left(-q^2;q^2\right)_\infty}{2}\left(2 - f(-q)+\frac{1}{(q;-q)_\infty}\right).
\end{align*}
Now we combine \eqref{E:(-q^2;q^2)} with \eqref{E:(q;-q)} and \Cref{L:f lemma} to get
\begin{align*}
	F_\ou^\ed(q) \sim \frac12\sqrt\frac{\pi}{2z}e^\frac{\pi^2}{12z}
\end{align*}
as $z \to 0$ in any region $R_\Delta$. Applying \Cref{P:CorToIngham} with $\l=\frac12\sqrt{\frac\pi2}$, $\beta = - \frac 12$, and $\gamma = \frac{\pi^2}{12}$ gives the result.

\subsection{Proof of equation (\ref{T:OD-ED main term})}

By \Cref{Injection} we know that $p^\ed_\od(n)$ is weakly increasing, therefore we may apply \Cref{P:CorToIngham} to compute the asymptotics for $p^\ed_\od(n)$. 
By \cite[Theorem 1.1]{BJ}, the generating function
\begin{multline*}
F_\od^\ed(q) = 
1 + q + q^2 + 2 q^3 + 2 q^4 + 2 q^5 + 3 q^6 + 4 q^7 + 5 q^8 + 5 q^9 + 
 6 q^{10} \\ + 7 q^{11} + 9 q^{12} + 10 q^{13} + 12 q^{14} + 14 q^{15} + 16 q^{16} + 
 17 q^{17} + 20 q^{18} + \ldots
\end{multline*}
satisfies the identity
\begin{equation*}
	F_\od^\ed(q) = \dfrac{(1+q)\left( -q^2; q^2 \right)_\infty}{1-q} - \dfrac{q \left( -q; q^2 \right)_\infty}{1-q}.
\end{equation*}
Using \eqref{E:(-q^2;q^2)} we have that as $z \to 0$ in regions $R_\Delta$, 
\[
	\frac{(1+q)\left(-q^2;q^2\right)_\infty}{1-q} \sim \frac{\sqrt2e^\frac{\pi^2}{24z}}{z}.
\]
Likewise, using equation \eqref{E:(-q;q^2)} we obtain the asymptotic
\[
	-\frac{q\left(-q;q^2\right)_\infty}{1-q} \sim -\frac{e^\frac{\pi^2}{24z}}{z}
\]
as $z \to 0$ in $R_\Delta$. Combining these formulas, we have
\begin{align*}
	F^\ed_\od(q) \sim \dfrac{\sqrt{2}-1}{z} e^{\frac{\pi^2}{24z}}.
\end{align*}
Thus, by \Cref{P:CorToIngham} with $\l=\sqrt2-1$, $\b=-1$, and $\g=\frac{\pi^2}{24}$ equation (\ref{T:OD-ED main term}) follows.

\section{Open questions}\label{openquestion}

The wide variety of different functions and different properties exhibited among partitions with parts separated by parity suggests many different directions to pursue open questions. The generating functions which appear in the proof of Theorem \ref{T:Main Theorem} include modifications and combinations of modular forms, mock modular forms, false modular forms, and false-indefinite theta functions related to Maass forms. For these modular objects, there are natural inclusions within vector-valued versions. It would be natural to ask whether such vector-valuedness contains combinatorial information related to partitions with parts separated by parity or other connected partition-theoretic phenomena. It would also be natural to ask about two-variable generalizations of these generating functions, and whether such generalizations exhibit nice structure from the perspective of Jacobi forms or $q$-series.

There is a wide variety of applications of partition with parts separated by parity and connections with other topics in partition theory to be found in the literature. For instance, there are connections with mock theta functions \cite{FT} and separable integer partition classes \cite{CHTW}. There are also modifications of these functions which exhibit congruences \cite{A2, FT}, and it would be natural to ask whether suitable modifications of all of these functions exhibit congruences.

We also note that Theorem \ref{T:Main Theorem} implies for $n \gg 1$ the inequalities
\begin{align*}
	p_\ed^\od(n) < p_\od^\ed(n) < p_\od^\eu(n) < p_\ed^\ou(n) < p_\eu^\od(n) < p_\eu^\ou(n) < p_\ou^\ed(n) < p_\ou^\eu(n).
\end{align*}
Several of these inequalities have straightforward combinatorial explanations, but certain inequalities seem to lack obvious explanations. It would be interesting to obtain a more complete combinatorial explanation for why this string of inequalities holds.

\end{document}